\newtheorem{theorem}{Theorem}           
\newtheorem{lemma}{Lemma}
\theoremstyle{definition}
\newtheorem{remark}{Remark}
\newcommand{\card}{\operatorname{card}}
\renewcommand{\gg}{>\kern-2pt>}
\renewcommand{\ll}{<\kern-2pt<}
\renewcommand{\S}{\operatorname{\mathsf{S}\!}}
\renewcommand{\S}{\mathsf{S}}
\renewcommand{\S}{\mathcal{S}}
\renewcommand{\gg}{>\kern-2pt>}
\renewcommand{\ll}{<\kern-2pt<}
\renewcommand{\le}{\leqslant}
\renewcommand{\ge}{\geqslant}
\newcommand{\Si}{\Sigma}
\newcommand{\LD}{\mathcal{L}\!\mathcal{D}}
\renewcommand{\LD}{\mathcal{L}{\kern -1.9pt}\mathcal{D}}
\renewcommand{\LD}{\mathcal{D}}
\renewcommand{\LD}{\mathcal{L}{\kern -4pt}\mathcal{C}}
\renewcommand{\LD}{\mathcal{R}{\kern -3pt}\mathcal{C}}
\newcommand{\PP}{\operatorname{\mathsf{P}}}
\renewcommand{\PP}[1]{\mathcal{P}^{#1}_+}
\renewcommand{\PP}{\mathcal{P}}
\begin{document}

\title[Nearest-neighbor distances between points in a rectangle]{Exact upper bound on the sum of squared nearest-neighbor distances between points in a rectangle}


\author{Iosif Pinelis}

\address{Iosif Pinelis, Department of Mathematical Sciences\\
Michigan Technological University\\
Hough\-ton, Michigan 49931, USA
\email{ipinelis@mtu.edu}}

\CorrespondingAuthor{Iosif Pinelis}


\date{07.10.2018
}                               

\keywords{Exact bounds; metric geometry; geometric inequalities; nearest-neighbor distances; convexity; non-convex optimization
}

\subjclass{
26B25, 26D20, 49K30, 51M16, 52A40, 52A41 
}

        
%
%
%
%

\begin{abstract}
An exact upper bound on the sum of squared nearest-neighbor distances between points in a rectangle is given. 
\end{abstract}

\maketitle



\section{Introduction and summary}\label{intro}

For any natural $n\ge2$ and any positive real numbers $a$ and $b$, let $P_1,\dots,P_n$ be distinct points in an $a\times b$ rectangle $R$. For each $i\in[n]:=\{1,\dots,n\}$, let 
\begin{equation}\label{eq:d_i}
	d_i:=\min\big\{P_iP_k\colon k\in[n]\setminus\{i\}\big\},
\end{equation}
where $PQ$ denotes the Euclidean distance between points $P$ and $Q$. So, $d_i$ is the distance from the point $P_i$ to its nearest neighbor among the points $P_1,\dots,P_{i-1},P_{i+1},\dots,P_n$.  

\begin{theorem}\label{th:}
\begin{equation}\label{eq:}
	\sum_1^n d_i^2\le2a^2+2b^2. 
\end{equation}
The upper bound $2a^2+2b^2$ on $\sum_1^n d_i^2$ is exact in the following sense: it is attained (i) when $n=2$ and the points $P_1,P_2$ are opposite vertices of the rectangle $R$ and (ii) when $n=4$, $a=b$, and the points $P_1,\dots,P_4$ are the vertices of the square $R$. 
\end{theorem}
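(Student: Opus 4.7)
The base case $n=2$ is immediate: $\sum_{i=1}^{2} d_i^2 = 2|P_1P_2|^2 \le 2D^2 = 2(a^2+b^2)$, where $D = \sqrt{a^2+b^2}$ is the diameter of $R$, with equality iff $P_1, P_2$ are at opposite corners. This also establishes case (i) of the exactness claim.

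For $n \ge 3$, the plan is to treat the bound as the optimal value of the non-convex maximization of $F(P_1,\ldots,P_n) := \sum_{i=1}^n d_i^2$ over $R^n$. Extend $F$ by continuity to all of $R^n$ using the convention that $d_i = 0$ whenever $P_i$ coincides with some other $P_k$; by compactness a maximizer $(P_1^*,\ldots,P_n^*)$ exists. When the nearest-neighbor assignments $j(\cdot)$ are unique, the partial gradient of $F$ with respect to $P_i$ equals $2\bigl[(P_i^* - P_{j(i)}^*) + \sum_{k:\,j(k)=i} (P_i^* - P_k^*)\bigr]$; moving $P_i^*$ along this direction strictly increases $F$ whenever the gradient is nonzero. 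So, barring the non-generic case where $P_i^*$ sits at the centroid of its neighborhood in the nearest-neighbor digraph, extremality forces every $P_i^*$ onto $\partial R$ with the gradient pointing outward.

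A finer variational analysis --- combining tangential motion along each side of $R$, gradient-cone conditions at corners, and the combinatorial structure of the nearest-neighbor digraph around each $P_i^*$ --- should then push every $P_i^*$ to a corner of $R$. I expect this refinement to be the main obstacle, since ruling out maximizers with some $P_i^*$ in the relative interior of a side calls for delicate bookkeeping of the contributions from $d_i^2$ and from $d_k^2$ for every in-neighbor $k$ of $i$, and will likely demand a case analysis based on the local graph structure.

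Once every $P_i^*$ is at a corner of $R$, at most $4$ distinct positions are available. For $n \ge 5$ the maximizer must then have coincident points, and the bound follows by induction on $n$ applied to the reduced effective configuration. For $n \le 4$ with points at corners, direct computation finishes the argument: $n=2$ at opposite corners saturates the bound (case (i)), while $n=2$ at adjacent corners gives $2\max(a,b)^2 \le 2(a^2+b^2)$; for $n=3$ any choice of three corners yields $2\min(a,b)^2 + \max(a,b)^2 < 2(a^2+b^2)$; and for $n=4$ all four corners yield $4\min(a,b)^2$, which equals $2(a^2+b^2)$ iff $a=b$, thereby settling case (ii).
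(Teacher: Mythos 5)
There is a genuine gap, and it is fatal to the strategy rather than a fixable detail. Your whole plan hinges on the claim that a maximizer of $\sum_1^n d_i^2$ must have every point at a corner of $R$, with the ``finer variational analysis'' deferred as the main obstacle. That claim is false. For $n=3$ in a square ($a=b$), three corners give $2a^2+a^2=3a^2$, but the true maximum is $12(2-\sqrt3)\,a^2\approx3.215\,a^2$, attained by a near-equilateral configuration with points in the relative interiors of the sides (this is exactly the middle branch of the explicit formula $S_3(a,b)$ recorded in the paper's Remark). So no amount of bookkeeping of gradient cones will push all points to corners, and the final ``direct computation for $n\le4$ at corners'' would be maximizing over the wrong candidate set. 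A secondary issue: the first-order argument itself is shaky, since $d_i^2$ is a \emph{minimum} of smooth functions, so at configurations with ties the function is only directionally differentiable, and increasing the currently active branch need not increase the minimum; these non-smooth local maxima are precisely where the difficulty lives, as the paper emphasizes. The reduction of $n\ge5$ to fewer points via coincident maximizers also cannot start until the (false) corner claim is established.

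The paper takes an entirely different route: induction on $n$ via quartering $R$ into four congruent $\frac a2\times\frac b2$ rectangles. Subrectangles containing at least two points are handled by the induction hypothesis (each contributes at most $2(\frac a2)^2+2(\frac b2)^2$); subrectangles containing exactly one point are absorbed by a lemma bounding $\Si(\PP_1)+d(Q,\PP_1)^2\le a^2+b^2$; the leftover small cases $n=3,4$ are settled by replacing each $d_i^2$ by a convex majorant (an average of squared distances with weights summing to $1$) and maximizing that convex function over a product of intervals, so the maximum sits at vertices of the \emph{parameter} box, not of $R$. If you want to salvage a variational approach, you would at minimum need to characterize the non-corner critical configurations, which for general $n$ is essentially as hard as the original problem.
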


One might note here that, without further restrictions, the exact lower bound on $\sum_1^n d_i^2$ is the trivial bound $0$, ``attained in the limit" when the distinct points $P_1,\dots,P_n$ are arbitrarily close to one another. 


The terms used in definition \eqref{eq:d_i} are similar to those in the definition of the cells 
\begin{equation}
	C_i:=\big\{P\colon PP_i\le\min\nolimits_{k\in[n]\setminus\{i\}}PP_k\big\}  
\end{equation}
of the Voronoi diagram/tessellation $\{C_1,\dots,C_n\}$ generated by the points $P_1,\dots,P_n$. 
Voronoi diagrams \cite{aurenhammer} have very broad applications, not only in mathematics, but also in sciences, engineering, and other fields. 

The notion of nearest neighbors is used in various other ways as well, in particular in statistics \cite{berrett-samworth}, computational geometry \cite{agarwal-etal}, information theory \cite{gao-etal}, computer science \cite{andoni-etal}, biology \cite{janssen-etal}, and elsewhere.  


\section{Proof of Theorem~1
}\label{proof}

If the minimum in the definition \eqref{eq:d_i} of the $d_i$'s were replaced by the maximum, then it would be very easy to give the exact upper bound on $\sum_1^n d_i^2$. Indeed, since the largest distance between two arbitrary points of the rectangle $R$ is $\sqrt{a^2+b^2}$, this exact upper bound on $\sum_1^n d_i^2$ would be $n(a^2+b^2)$, attained when all the points $P_i$ are at two opposite vertices of the rectangle $R$. 

However, despite the great simplicity of the actual statement of Theorem~\ref{th:}, its proof is not at all simple. The reason for this is that the summands $d_i^2$ in the sum in \eqref{eq:} are the minima, rather than maxima, of a possibly large number of convex functions of the points $P_1,\dots,P_n$. So, the sum $\sum_1^n d_i^2$ as a function of $P_1,\dots,P_n$ is in general non-convex, usually with a large number of non-smooth local maxima (some of which may be global), plus a number of false (or quasi-) non-smooth local ``maxima''. Two of such possible situations are illustrated in Fig.~\ref{fig:pic}, where only one of the many variables on which the sum $\sum_1^n d_i^2$ depends is allowed to actually vary. One can see that the dependence of $\sum_1^n d_i^2$ on just one of the variables may already be quite complicated and 
exhibit quite different patterns. Of course, the complexity and variety of patterns of dependence of the sum $\sum_1^n d_i^2$ on all the variables involved (that is, on the all the coordinates of all the points $P_1,\dots,P_n$) is much greater yet. 

\begin{figure}[h]
	\centering
		\includegraphics[width=1.00\textwidth]{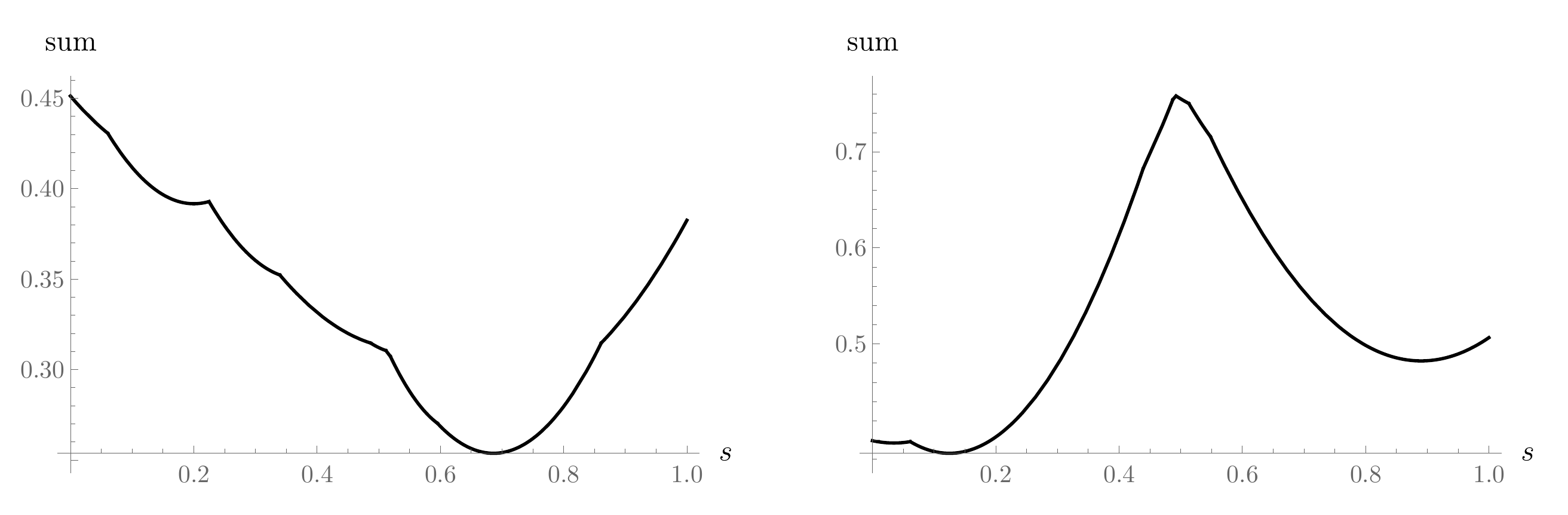}
	\caption{Each of the two graphs here is obtained by selecting $n=6$ pseudorandom points in the unit square $[0,1]^2$, replacing the abscissa of one of the $6$ points by a variable $s\in[0,1]$, and then plotting the resulting varying sum $\sum_1^6 d_i^2$ against $s$.}
	\label{fig:pic}
\end{figure}

To deal with the described difficulties, we will have to consider a variety of cases, in each of the cases carefully constructing convex functions that would be tight enough majorants, at least locally (in an appropriate sense), of the non-convex sum $\sum_1^n d_i^2$. Similar techniques could possibly be used elsewhere. 


\begin{remark}
One may realize at this point that, for each natural $n$, 
finding the exact upper bound on $\sum_1^n d_i^2$ is a problem of real algebraic geometry (also referred to as semialgebraic geometry); see e.g.\ \cite{basu-etal06}. In principle, for each given $n$ such a problem can be solved completely algorithmically. However, even for $n=3$, it takes minutes of computer time, in addition to quite a bit of preparation and manual post-processing of the computer algebra system output, to obtain the following exact upper bound on $\sum_1^n d_i^2=\sum_1^3 d_i^2$: 
\begin{equation}
S_3(a,b):=\left\{
\begin{aligned}
a^2+\frac{9 b^2}4 &\text{\quad if \quad} \frac ba\le\frac1{\sqrt3}, \\ 
\frac{3a^2}4+3 b^2 &\text{\quad if \quad} \frac1{\sqrt3}\le\frac ba\le\frac{\sqrt3}2, \\ 
12 \big(a^2-\sqrt3 a b+b^2\big) &\text{\quad if \quad} \frac{\sqrt3}{2}\le\frac ba\le\frac2{\sqrt3}, \\ 
3 a^2+\frac{3b^2}4 &\text{\quad if \quad} \frac2{\sqrt3}\le\frac ba\le\sqrt3, \\ 
\frac{9 a^2}4+b^2 &\text{\quad if \quad} \frac ba\ge\sqrt3.  
\end{aligned}
\right.
\end{equation}
It follows that 
\begin{equation}
	S_3(a,b)\le\frac{12}7\,(a^2+b^2), 
\end{equation}
with the equality if $b/a\in\{2/\sqrt3,\sqrt3/2\}$, and $\frac{12}7\approx1.71$. 
Also, $S_3(a,b)=\break 
6 (2 - \sqrt3)\,(a^2+b^2)\approx1.61\,(a^2+b^2)$ for $a=b$. 

For values of $n$ greater than $4$ (and also for $n=4$ with $a\ne b$), 
the problem of finding the exact upper bound on $\sum_1^n d_i^2$ is likely much more difficult than it is for $n=3$. \qed
\end{remark}


Let us now turn to the actual proof of Theorem~\ref{th:}. 
For any 
finite set $\S$ of points on 
the Euclidean plane, let $\Si(\S)$ denote the sum of squared nearest-neighbor distances between the points in $\S$. So, \eqref{eq:} can be rewritten as 
\begin{equation}\label{eq:rewr}
	\Si(\PP)\overset{\text{(?)}}\le2a^2+2b^2, 
\end{equation}
where 
\begin{equation*}
	\PP:=
	\{P_1,\dots,P_n\}. 
\end{equation*}

Let us prove \eqref{eq:rewr} by 
induction on $n$. For $n=2$, \eqref{eq:rewr} is obvious. Suppose now that $n\ge3$. 

Without loss of generality (wlog), $R=[0,a]\times[0,b]$. ``Partition" $R$ into the four congruent $\frac a2\times \frac b2$ rectangles 
\begin{equation*}
\text{$R_1:=[0,\tfrac a2]\times[0,\tfrac b2],\ R_2:=[\tfrac a2,a]\times[0,\tfrac b2],\ 
R_3:=[0,\tfrac a2]\times[\tfrac b2,b],\ R_4:=[\tfrac a2,a]\times[\tfrac b2,b]$.}	
\end{equation*}
For each $j\in\{1,\dots,4\}$, let 
\begin{equation*}
	\PP_j:=R_j\cap\PP\quad\text{and}\quad n_j:=\card\PP_j,
\end{equation*}
where $\card$ denotes the cardinality. 

\emph{Case 1: $n_1,\dots,n_4\ge2$}. Then $2\le n_j<n$ for all $j\in\{1,\dots,4\}$. So, by induction, 
\begin{equation*}
	\Si(\PP)\le\Si(\PP_1)+\cdots+\Si(\PP_4)\le4[2(\tfrac a2)^2+2(\tfrac b2)^2]= 2a^2+2b^2, 
\end{equation*}
which proves \eqref{eq:rewr} in Case 1.

The consideration of the next two cases is based in part on the following result. 

\begin{lemma}\label{lem:} Suppose that $n_1\ge2$ and $n_2=1$, so that there is exactly one point, say $Q$, in the set $\PP_2$. Then 
\begin{equation*}
	\Si(\PP_1)+d(Q,\PP_1)^2\le a^2+b^2.  
\end{equation*}
Here and in what follows, for any point $P$ and any set $\S$ of points, 
\begin{equation*}
	d(P,\S):=\inf_{A\in\S}PA, 
\end{equation*}
the shortest distance from $P$ to the points in $\S$. 
\end{lemma}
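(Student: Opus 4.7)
The plan is to leverage the induction hypothesis (Theorem~\ref{th:}) applied to $\PP_1$ twice, on successively smaller rectangles. Set $r:=d(Q,\PP_1)$. Since $2\le n_1<n$, applying Theorem~\ref{th:} to the $n_1$ points $\PP_1\subset R_1=[0,a/2]\times[0,b/2]$ yields the first-round bound $\Si(\PP_1)\le 2(a/2)^2+2(b/2)^2=(a^2+b^2)/2$. I would now split on the size of $r$.

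If $r^2\le(a^2+b^2)/2$, adding the two bounds $(a^2+b^2)/2$ immediately gives the conclusion. Otherwise $r^2>(a^2+b^2)/2$, and I would translate this lower bound on $r$ into a geometric constraint on $\PP_1$. Every $P=(x_P,y_P)\in\PP_1$ lies outside the open disk of radius $r$ around $Q=(x_Q,y_Q)$; combining this with $x_P\le a/2\le x_Q$ and $|y_P-y_Q|\le b/2$ gives $(x_Q-x_P)^2\ge r^2-b^2/4=:s^2$, i.e.\ $x_P\le x_Q-s$ (with $s>0$ since $r^2>(a^2+b^2)/2>b^2/4$). Hence $\PP_1$ fits into the strictly narrower rectangle $[0,x_Q-s]\times[0,b/2]$, and a second application of Theorem~\ref{th:} to this rectangle refines the bound to $\Si(\PP_1)\le 2(x_Q-s)^2+b^2/2$. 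Using $r^2=s^2+b^2/4$, the target inequality $\Si(\PP_1)+r^2\le a^2+b^2$ reduces to $2(x_Q-s)^2+s^2\le a^2+b^2/4$. Since the left-hand side is non-decreasing in $x_Q$ on $[s,a]$ (the lower endpoint forced by $\PP_1\ne\emptyset$), it suffices to check it at $x_Q=a$, where it becomes the elementary $(a-s)(a-3s)\le b^2/4$. The case hypothesis forces $s^2>a^2/2$, so $s>a/\sqrt 2>a/3$, giving $a-3s<0$, while $s\le a$ gives $a-s\ge 0$; the product is thus $\le 0<b^2/4$.

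The step I expect to be the main obstacle is the second case. A single application of Theorem~\ref{th:} to $R_1$ is not sharp enough to absorb the extra term $r^2$ whenever $b<a\sqrt 2$ and $r$ is close to its geometric maximum $\sqrt{a^2+b^2/4}$. The key device is to convert the bound $r^2>(a^2+b^2)/2$ into the spatial constraint $x_P\le x_Q-s$, compressing the effective bounding rectangle of $\PP_1$, and to then reapply the theorem on that smaller rectangle. The final polynomial inequality is easy, but its validity relies crucially on the range $s>a/3$ guaranteed by the case hypothesis.
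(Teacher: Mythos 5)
Your proof is correct. Both branches check out: the crude bound $\Si(\PP_1)+r^2\le\tfrac{a^2+b^2}2+\tfrac{a^2+b^2}2$ handles small $r$, and when $r^2>\tfrac{a^2+b^2}2$ the exclusion disk around $Q$ really does force $x_P\le x_Q-s$ for every $P\in\PP_1$ with $s=\sqrt{r^2-b^2/4}>a/\sqrt2$, after which the second application of the induction hypothesis and the verification $(a-s)(a-3s)\le0\le b^2/4$ are sound. Your route shares the paper's central idea---trade the horizontal width of a bounding rectangle of $\PP_1$ against $d(Q,\PP_1)$ and invoke the induction hypothesis on the shrunk rectangle---but runs it in the opposite direction. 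The paper takes $s$ to be the largest abscissa occurring in $\PP_1$, applies the induction hypothesis on $[0,s]\times[0,\tfrac b2]$, and bounds $d(Q,\PP_1)^2$ from \emph{above} by the squared distance from $Q$ to a witness point of abscissa $s$ (after a vertical-symmetry reduction of $Q$'s ordinate to $[0,\tfrac b4]$), arriving at $2s^2+(a-s)^2+\tfrac34 b^2\le a^2+\tfrac34 b^2$ in a single stroke with no case split. You instead keep $r=d(Q,\PP_1)$ as the parameter and use its largeness to bound the width from above. Under the substitution $s\mapsto a-s$ your final polynomial $3s^2-4as+a^2\le b^2/4$ becomes exactly the paper's $s(3s-2a)\le0$, so the two computations are dual. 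The paper's version is slightly shorter (one case, and the needed range $s\le\tfrac a2$ is automatic), while yours avoids both the symmetry reduction and the need to exhibit an explicit nearby point of $\PP_1$; either argument serves the rest of the induction equally well.
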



\begin{proof} Let $s$ be the largest abscissa of all the $n_1$ points in $\PP_1$, so that $0\le s\le\tfrac a2$. Let $(u,v):=Q$, so that $\tfrac a2\le u\le a$ and $0\le v\le\tfrac b2$. By vertical symmetry, wlog $0\le v\le\tfrac b4$, and hence $d(Q,\PP_1)^2\le(u-s)^2+(\tfrac b2-v)^2$. Also, by induction, $\Si(\PP_1)\le2s^2+2(\tfrac b2)^2$. Therefore and in view of the mentioned ranges of the variables $s,u,v$, 
\begin{align*}
	\Si(\PP_1)+d(Q,\PP_1)^2&\le2s^2+2(\tfrac b2)^2+(u-s)^2+(\tfrac b2-v)^2 \\ 
	&\le2s^2+2(\tfrac b2)^2+(a-s)^2+(\tfrac b2)^2 \\ 
	&\le a^2+\tfrac34\,b^2\le a^2+b^2,
\end{align*}
which proves Lemma 1. 
\end{proof}

\emph{Case 2: One of the $n_j$'s is $1$, and the other $n_j$'s are $\ge2$}. Here wlog 
$n_1,n_3,n_4\ge2$ and $n_2=1$. Then, in view of Lemma~\ref{lem:} and by induction, 
\begin{align*}
	\Si(\PP)&\le\Si(\PP_1)+d(Q,\PP_1)^2+\Si(\PP_3)+\Si(\PP_4) \\ 
	&\le a^2+b^2+2[2(\tfrac a2)^2+2(\tfrac b2)^2]= 2a^2+2b^2. 
\end{align*}

\emph{Case 3: Two of the $n_j$'s are $1$'s, and the other $n_j$'s are $\ge2$}. Here wlog either
$n_1,n_3\ge2$ and $n_2=n_4=1$ (the ``adjacent" subcase) or $n_1,n_4\ge2$ and $n_2=n_3=1$ (the ``non-adjacent" subcase). 
In the ``adjacent" subcase, let $Q$ be the only point in $\PP_2$, and let $T$ be the only point in $\PP_4$. Then, in view of Lemma~\ref{lem:} and by induction, 
\begin{equation*}
	\Si(\PP)\le\big(\Si(\PP_1)+d(Q,\PP_1)^2\big)+\big(\Si(\PP_3)+d(T,\PP_3)^2\big)\le(a^2+b^2)+(a^2+b^2)= 2a^2+2b^2. 
\end{equation*}
The ``non-adjacent" subcase is considered quite similarly, by interchanging $R_3$ and $R_4$. 

\emph{Case 4: Three of the $n_j$'s are $1$'s, and the other $n_j$ is $\ge2$}. Here wlog 
$n_1\ge2$ and $n_2=n_3=n_4=1$. Let $Q_2=(p,q),Q_3=(u,v),Q_4=(x,y)$ be the unique points in $\PP_2,\PP_3,\PP_4$, respectively, so that $p,x\in[\tfrac a2,a]$, $v,y\in[\tfrac b2,b]$, $u\in[0,\tfrac a2]$, $q\in[0,\tfrac b2]$. 

Let $s$ and $t$ be, respectively, the largest abscissa and the largest ordinate of all the $n_1$ points in $\PP_1$. Then  
\begin{equation}\label{eq:B4}
	\Si(\PP)\le B_4:=\Si(\PP_1)+Q_2S^2+Q_3T^2+\frac{Q_4Q_2^2+Q_4Q_3^2}2,
\end{equation}
where $S$ is a point (among the $n_1$ points in $\PP_1$) with the largest abscissa $s$, and $T$ is a point (among the $n_1$ points in $\PP_1$) with the largest ordinate $t$, so that $S=(s,\eta)$ for some $\eta\in[0,t]$, and $T=(\xi,t)$ for some $\xi\in[0,s]$. Of course, $Q_2S^2$ is a convex function of $\eta\in[0,t]$, and so, for $S_0:=(s,0)$ and $S_t:=(s,t)$ we have 
\begin{equation*}
	Q_2S^2\le\max(Q_2S_0^2,Q_2S_t^2)
	=(p - s)^2 + \max(q^2, (q-t)^2). 
\end{equation*}
Similarly, 
\begin{equation*}
	Q_3T^2\le
\max(u^2, (u-s)^2) + (v - t)^2. 
\end{equation*}
Next,
\begin{equation*}
	Q_4Q_2^2+Q_4Q_3^2=(x - p)^2 + (y - q)^2 + (x - u)^2 + (y - v)^2, 
\end{equation*}
which is convex in 
$(x,y)$, with the maximum in 
$(x,y)\in[\tfrac a2,a]\times[\tfrac b2,b]$ attained at 
$(x,y)=(a,b)$, for any given $p\in[\tfrac a2,a]$, $v\in[\tfrac b2,b]$, $u\in[0,\tfrac a2]$, $q\in[0,\tfrac b2]$. 
Further, by induction, $\Si(\PP_1)\le2s^2+2t^2$. Thus, by \eqref{eq:B4}, 
\begin{multline*}
	\Si(\PP)\le B_4\le \tilde B_4:=
	2 s^2+2t^2+\max \left(q^2,(q-t)^2\right)+\max \left((u-s)^2,u^2\right) \\ 
	+(p-s)^2+(v-t)^2 \\ 
	+\tfrac{1}{2} \left((a-p)^2+(b-q)^2\right)+\tfrac{1}{2} \left((a-u)^2+(b-v)^2\right). 
\end{multline*}
Clearly, $\tilde B_4$ is convex in 
$(p,v)$, and one can see that the maximum of $\tilde B_4$ in $(p,v)\in[\tfrac a2,a]\times[\tfrac b2,b]$ is attained at $(p,v)=(a,b)$, 
for any given $s\in[0,\tfrac a2]$, $t\in[0,\tfrac b2]$, $u\in[0,\tfrac a2]$, $q\in[0,\tfrac b2]$. So,
\begin{equation*}
	\tilde B_4\le B_{41}+B_{42}+B_{43}, 
\end{equation*}
where 
\begin{align*}
	B_{41}&:=\max \left(q^2,(q-t)^2\right)- b q+q^2/2, \\ 
	B_{42}&:=\max \left(u^2,(u-s)^2\right)- a u+u^2/2, \\ 
	B_{43}&:=\tfrac32(a^2+b^2)-2 a s-2 b t+3(s^2+t^2). 
\end{align*}
Since $B_{41}$ is convex in $q$ and $B_{42}$ is convex in $u$, one can see that 
\begin{equation*}
	B_{41}\le t^2\quad\text{and}\quad B_{42}\le s^2
\end{equation*}
for $q,t\in[0,\tfrac b2]$ and $u,s\in[0,\tfrac a2]$. So, 
\begin{align*}
	\Si(\PP)\le\tilde B_4&\le B_{41}+B_{42}+B_{43} \\ 
	&\le\tfrac32(a^2+b^2) +4 (s-\tfrac a2) s + 4 (t-\tfrac b2) t
	\le\tfrac32(a^2+b^2)\le2 a^2 + 2 b^2.  
\end{align*}


\emph{Case 5: This case obtains from Cases 2, 3, 4 by replacing there some of the conditions of the form $n_j=1$ by $n_j=0$}. This case immediately follows from Cases 2, 3, 4, because now the nonnegative contributions of the singleton sets $\PP_j$ corresponding to $n_j=1$ will be replaced by $0$ --- with the only exception occurring when three of the $n_j$'s are $0$ and hence the remaining one of the $n_j$'s (say $n_1$) equals $n$. Indeed, in the latter exceptional subcase, we cannot use the induction, since $n_1=n\not<n$. The remedy in this case is to continue the ``partitioning" of the smaller rectangles containing all the $n$ points into yet smaller congruent rectangles until we no longer have such an exceptional situation. This process will stop. Indeed, if it never stopped, then all the $n$ distinct points (with $n\ge3$) would be eventually contained in a singleton set, which is a contradiction.

\medskip
\hrule
\medskip

So far, we have considered all the cases when at least one of the $n_j$'s is $\ge2$. Otherwise, we have $n_j\le1$ for all $j\in\{1,\dots,4\}$ and hence $n\le n_1+\cdots+n_4\le4$. Since $n\ge3$, it remains to consider the following two cases.     

\emph{Case 6: $n=3$}. By shrinking the rectangle $R$ horizontally and vertically, we can obtain a possibly smaller rectangle $\tilde R\subseteq R$, with side lengths $\tilde a\le a$ and $\tilde b\le b$, such that $\tilde R$ still contains all the points $P_1,P_2,P_3$ and each side of $\tilde R$ contains at least one of the three points $P_1,P_2,P_3$. If we can then show that $\Si(\{P_1,P_2,P_3\})\le2\tilde a^2+2\tilde b^2$, then the desired inequality $\Si(\{P_1,P_2,P_3\})\le2a^2+2b^2$ will obviously follow. So, 
wlog we may assume that each side of $R$ contains at least one of the three points $P_1,P_2,P_3$. Then, by the pigeonhole principle, at least two sides of the rectangle $R$ must share one of the three points. Also, wlog none of the points $P_1,P_2,P_3$ is in the interior of $R$. Indeed, if e.g. $P_3$ is in the interior of $R$, then we can move $P_3$ away from the line $\ell(P_1,P_2)$ (through $P_1,P_2$) in the direction perpendicular to $\ell(P_1,P_2)$ (till we hit the boundary of $R$), so that all the pairwise distances between the points $P_1,P_2,P_3$ may only increase, and then $\Si(\PP)$ may only increase.  

Hence, wlog $P_1=(0,0),P_2=(u,b),P_3=(a,v)$ for some $u\in[0,a]$ and $v\in[0,b]$. So, 
\begin{equation*}
	\Si(\PP)\le B_6:=\frac{P_1P_2^2+P_1P_3^2}2+\frac{P_2P_1^2+P_2P_3^2}2+\frac{P_3P_2^2+P_3P_1^2}2=
	P_1P_2^2+P_1P_3^2+P_2P_3^2. 
\end{equation*}
Since $B_6$ is convex in 
$(u,v)$, its maximum in $(u,v)\in[0,a]\times[0,b]$ is attained when $(u,v)\in\{0,a\}\times\{0,b\}$, and this maximum is $2a^2+2b^2$. 
More explicitly, one may also note that 
\begin{equation*}
	\tfrac12\,B_6-(a^2+b^2)= - (a - u) u - (b - v) v\le0. 
\end{equation*}
This proves Case 6. 

\emph{Case 7: $n=4$}. Again, by shrinking the rectangle $R$, wlog we may assume that each side of $R$ contains at least one of the four points $P_1,P_2,P_3,P_4$. Also, wlog each of the four points $P_1,P_2,P_3,P_4$ is either (i) in the convex hull of the other three points or (ii) on the boundary of $R$. Indeed, otherwise wlog $P_4$ is in the interior of $R$, but not in the convex hull of $P_1,P_2,P_3$. Then there is a closed half-plane, say $H$, containing the points $P_1,P_2,P_3$ but not containing the point $P_4$. So,   
%
%
%
%
then we can move $P_4$ away from the 
half-plane $H$ in the direction perpendicular to 
the boundary line of $H$ (till we hit the boundary of $R$), so that all the pairwise distances between the points $P_1,P_2,P_3,P_4$ may only increase, and then $\Si(\PP)$ may only increase. Therefore, wlog we have one of the following two subcases. 

\emph{Subcase 7.1: $P_4$ is in the convex hull of $P_1,P_2,P_3$.} Then $P_4=(1-s-t)P_1+sP_2+tP_3$ for some $s,t$ such that $s,t\ge0$ and $s+t\le1$. 
Also, here, as in Case 6, wlog $P_1=(0,0),P_2=(u,b),P_3=(a,v)$ for some $u\in[0,a]$ and $v\in[0,b]$. Hence
\begin{equation*}
	\Si(\PP)\le B_{71}:=P_1P_4^2+P_2P_4^2+P_3P_4^2+[(1-s-t)P_4P_1^2+s\,P_4P_2^2+t\,P_4P_3^2]. 
\end{equation*}
Clearly, $B_{71}$ is convex in $(u,v)$ and hence attains its maximum in $(u,v)\in[0,a]\times[0,b]$ at one of the four vertices of the rectangle $[0,a]\times[0,b]$. To complete the consideration of Subcase 7.1, it 
remains to note that, given the above conditions on $s$ and $t$,  
\begin{align*}
	B_{71}\big|_{u=0,\,v=0}-(2a^2+2b^2)&=a^2 (t-1) (2 t+1)+b^2 (s-1) (2 s+1)\le0, \\ 
	B_{71}\big|_{u=a,\,v=0}-(2a^2+2b^2)&=a^2 (s+t) (2 (s+t)-3)+b^2 (s-1) (2 s+1)\le0, \\ 
	B_{71}\big|_{u=0,\,v=b}-(2a^2+2b^2)&=a^2 (t-1) (2 t+1)+b^2 (s+t) (2 (s+t)-3)\le0, \\ 
	B_{71}\big|_{u=a,\,v=b}-(2a^2+2b^2)&=\left(a^2+b^2\right) (s+t) (2 (s+t)-3)\le0.  
\end{align*}

Alternatively, one may note that the Hessian matrix of $B_{71}$ with respect to $s$ and $t$ equals $4G$, where $G$ is the Gram matrix of the vectors $P_2[=\overrightarrow{P_1P_2}=(u,b)]$ and $P_3[=\overrightarrow{P_1P_3}=(a,v)]$. So, $B_{71}$ is convex in $(s,t)$, and hence attains its maximum in $(s,t)$ at one of the points $(0,0),(0,1),(1,0)$. To complete the consideration of Subcase 7.1 this other way, it remains to note that 
%
%
\begin{align*}
B_{71}\big|_{s=0,\,t=0}-(2 a^2 + 2 b^2)&=(u^2-a^2)+(v^2-b^2)\le0, \\ 
B_{71}\big|_{s=0,\,t=1}-(2 a^2 + 2 b^2)&=
-(2a - u)u - 2 (b-v)v - b^2\le0, \\ 
B_{71}\big|_{s=1,\,t=0}-(2 a^2 + 2 b^2)&=
-(2b - v)v- 2 (a-u)u - a^2\le0. 
\end{align*}



\emph{Subcase 7.2: All the four points $P_1,P_2,P_3,P_4$ are on the boundary of $R$.} 

\emph{Subsubcase 7.2.1: None of the points $P_1,P_2,P_3,P_4$ is shared by any two sides of the rectangle $R$
.} 
So, wlog $P_1=(s,0),P_2=(0,t),P_3=(u,b),P_4=(a,v)$ for some $s,u\in[0,a]$ and $t,v\in[0,b]$. Then, similarly to the case of $n=3$ (that is, Case~6), here   
\begin{align*}
	\Si(\PP)\le B_{721}&:=\frac{P_1P_2^2+P_1P_4^2}2+\frac{P_2P_1^2+P_2P_3^2}2
	+\frac{P_3P_2^2+P_3P_4^2}2+\frac{P_4P_1^2+P_4P_3^2}2 \\ 
	&=P_1P_2^2+P_2P_3^2+P_3P_4^2+P_4P_1^2 \le2a^2+2b^2, 
\end{align*}
by convexity. More explicitly, one may also note that 
\begin{equation*}
	\tfrac12\,B_{721}-(a^2+b^2)=-(a - s) s - (b - t) t - (a - u) u - (b - v) v\le0. 
\end{equation*}
Subsubcase 7.2.1 is done. 

\emph{Subsubcase 7.2.2: One of the points $P_1,P_2,P_3,P_4$ (say $P_1$) is shared by two sides of the rectangle $R$.} So, wlog $P_1=(0,0)$. Suppose that one of the two sides of $R$ (say $S_1$ and $S_2$) sharing the point $P_1$ contains one of the points $P_2,P_3,P_4$; let us say this side is $S_1$. Then we can move $P_1$ slightly along the side $S_2$ out of its position at $(0,0)$. 
In view of the continuity of $\Si(\PP)$ in $P_1$, we can thereby get rid of the sharing and thus reduce Subsubcase 7.2.2 to Subsubcase 7.2.1 -- provided that at least one of the two sides of $R$ sharing the point $P_1$ contains one of the points $P_2,P_3,P_4$. So, wlog $P_1=(0,0)$ and none of the two sides of $R$ sharing the point $P_1$ contains any of the points $P_2,P_3,P_4$. So, one of the sides of $R$ not sharing the point $P_1$ contains two of the points $P_2,P_3,P_4$. Therefore and by the interchangeability of the horizontal and vertical directions, wlog $P_1=(0,0),P_2=(u,b),P_3=(a,v),P_4=(a,w)$ for some $u\in[0,a]$ and $v,w\in[0,b]$ such that $v>w$. So,   
\begin{equation*}
	\Si(\PP)\le B_{722}:=P_1P_2^2+P_2P_3^2+P_3P_4^2+P_4P_1^2\le2a^2+2b^2, 
\end{equation*}
again by convexity. 
More explicitly, one may also note that 
\begin{equation*}
	\tfrac12\,B_{722}-(a^2+b^2)=-(a - u) u - (b - v) v - (v - w) w\le0. 
\end{equation*}
Subsubcase 7.2.2 is done, as well as the entire proof of 
Theorem~\ref{th:}.  \qed

\bigskip

The special case of Theorem~\ref{th:} with $a=b=1$ was conjectured by T.\ Amdeberhan~\cite{pts-in-sqr}. 


\bibliographystyle{abbrv}

\bibliography{P:/pCloudSync/mtu_pCloud_02-02-17/bib_files/citations10.13.18a}

%
%
%
%
%
%
%
%
%
%

\end{document}